\newcommand{\R}{{\mathbb R} }
\newcommand{\cO}{{\mathcal O} }
\newcommand{\cS}{{\mathcal S} }
\newcommand{\cT}{{\mathcal T} }
\newcommand{\cX}{{\mathcal X} }
\newcommand{\cH}{{\mathcal H} }
\newcommand{\cK}{{\mathcal K} }
\newcommand{\wt}{\widetilde}
\newcommand{\pt}{\partial}
\def\ol#1{{\overline{#1}}}
\newtheorem*{Maintheorem*}{Main Theorem}
\newtheorem*{theorem*}{Theorem}
\newtheorem*{conjecture*}{Conjecture}
\newtheorem{theorem}{Theorem}
\newtheorem{definition}{Definition}
\newtheorem{lemma}{Lemma}
\newtheorem{remark}{Remark}
\newtheorem*{remark*}{Remark}
\newtheorem{proposition}{Proposition}
\newtheorem{corollary}{Corollary}
\def\ke{K{\"a}h\-ler-Ein\-stein }
\def\ks{Ko\-dai\-ra-Spen\-cer }
\def\ka{K{\"a}h\-ler}
\def\wp{Weil-Pe\-ters\-son }
\newcommand{\ii}{\mbox{\footnotesize$\sqrt{-1}$}}
\def\ddb{\sqrt{-1}\partial\overline{\partial}}
\def\cinf{C^\infty}
\def\gab{{g_{\alpha\ol\beta}}}
\def\db{{\ol\partial}}
\def\we{\wedge}
\def\CY{Calabi-Yau }
\def\equov#1{\hskip1.5mm\vtop{\hbox{$=$}\vskip-4mm\hbox{\tiny$\!\eqref{#1}$}}\hskip1.5mm}
\def\gequov#1{\hskip1.5mm\vtop{\hbox{$\geq$}\vskip-1.8mm\hbox{\tiny$\!\eqref{#1}$}}\hskip1.5mm}
\title{Kähler forms for families of Calabi-Yau manifolds}
\author[M.~Braun]{Matthias Braun}
\address{Fachbereich Mathematik und Informatik,
Philipps-Universität Marburg, Lahnberge, Hans-Meerwein-Straße, D-35032
Marburg, Germany}
\email{braunm@mathematik.uni-marburg.de}
\author[Y.-J.~Choi]{Young-Jun Choi}
\email{youngjun.choi@pusan.ac.kr}
\address{Department of Mathematics, Pusan National University, 2, Busandaehak-ro 63beongil, Geumjeong-gu, Busan, 46241, Korea}
\author[G.~Schumacher]{Georg Schumacher}
\email{schumac@mathematik.uni-marburg.de}
\address{Fachbereich Mathematik und Informatik,
Philipps-Universität Marburg, Lahnberge, Hans-Meerwein-Straße, D-35032
Marburg, Germany}
\subjclass[2010]{32G13, 53C55}
\keywords{Moduli, Calabi-Yau manifolds, Weil-Petersson metric}
\begin{document}

\begin{abstract}
\ke metrics for polarized families $(f:\cX \to S, \lambda_{\cX/S})$ of Calabi-Yau manifolds define a natural hermitian metric on the relative canonical bundle $K_{\cX/S}$. The computation of the curvature form being equal to the pull-back of the \wp form up to a numerical constant is used for the construction of a \ka\ form on $\cX$, whose restriction to the fibers is Ricci flat.
\end{abstract}

\maketitle

\section{Introduction}
S.T.~Yau's solution of the Calabi problem (\cite{yau, calyau}) and later developments had a strong impact on the theory of moduli spaces. Recalling the work of Griffiths \cite{gri} one could see that for a holomorphic family of polarized Calabi-Yau manifolds (with trivial canonical bundle) the curvature form of the Hodge bundle on the base space is equal to the \wp form.

Generalizing moduli of Calabi-Yau manifolds and canonically polarized varieties, in  \cite{f-s:extremal} moduli of extremal \ka\ manifolds were constructed and studied.
In this note we want to focus on the curvature of the relative canonical bundle on the total space of a holomorphic family of \CY manifolds -- somewhat analogous to the situation of (effective) families of canonically polarized manifolds, equipped with \ke metrics, where the relative canonical bundle on the total space is strictly positive (\cite{sch:inv12,sch-preprint08}).

Let $(f:\cX \to S, \lambda_{\cX/S})$ be a holomorphic, polarized family of Calabi-Yau manifolds $\cX_s= f^{-1}(s)$, i.e.\ compact manifolds with $c_{1,\R}(\cX_s)=0$, equipped with Ricci flat \ka\ forms $\omega_{\cX_s}$. The relative volume form $\omega^n_{\cX/S}= g\, dV$ induces a hermitian metric $g^{-1}$ on the relative canonical bundle $K_{\cX/S}$. We arrive at the following fact.

\medskip
\noindent\textit{
The curvature form of $\cK_{\cX/S}$ is equal to the pull-back of the \wp form up to a numerical constant:
\begin{equation}\label{eq:relcan}
 2\pi c_1(K_{\cX/S}, g^{-1})= \ddb \log g = \frac{1}{ {\rm vol}\,\cX_s} f^* \omega^{WP}.
\end{equation}
}
The above statement is related to the computation of the \wp form as curvature form of the Hodge metric on the base of a family, (where the canonical bundles of the fibers are assumed to be trivial). We will give a short direct argument in terms of the Ricci flat metrics, which will also be used in the proof of the following theorem. The results are related to \cite{br:diss} and to \cite{bcs}.

Given a reduced complex space $\cX$, we denote by $H^{1,1}(\cX)_\R$ the $\pt\ol\pt$-cohomology. It consist of classes of locally $\pt\ol\pt$-exact differentiable $(1,1)$-forms modulo $\pt\ol\pt$-exact forms (cf.\ \cite[Section 1]{f-s:extremal}, also for the treatment of singularities). This cohomology theory is suitable for our purpose.

\begin{theorem}\label{th}
  Let $(f:\cX \to S, \lambda_{\cX/S})$ be a holomorphic, polarized, effective family of Calabi-Yau manifolds $\{\cX_s\}_{s\in S}$, where $\cX_s  = f^{-1}(s)$ such that
  \begin{itemize}
  \item[(a)] the polarization $\lambda_{\cX/S} \in R^1f_*\Omega^1_{\cX/S}(S)$ is represented by a class $\lambda_\cX\in H^{1,1}(\cX)_\R$ on  $\cX$,

      \noindent or

  \item[(b)] the first Betti-numbers of the fibers vanish: $b_1(\cX_s)=0.
    $
  \end{itemize}
  Assume that the Green's functions for functions on the fibers $\cX_s$ are uniformly bounded from below (by a negative constant). Then there exists a \ka\ form $\wt\omega_\cX$ on $\cX$, whose restrictions to the fibers $\cX_s$ are the Ricci flat forms on $(\cX_s, \lambda_{\cX_s})$.
\end{theorem}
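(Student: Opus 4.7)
The plan is to start from a global smooth $(1,1)$-form whose fiberwise cohomology class is the polarization, correct it fiberwise by a Monge--Ampère potential so that the restrictions become the Ricci flat representatives, and then add a large multiple of a Kähler form pulled back from $S$ to restore positivity in the horizontal direction.

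First I would construct a smooth real $d$-closed $(1,1)$-form $\omega_\cX^0$ on $\cX$ whose restriction to each fiber $\cX_s$ represents $\lambda_{\cX_s}$. In case (a) this is immediate: take any smooth $\ddb$-representative of $\lambda_\cX \in H^{1,1}(\cX)_\R$. In case (b) the vanishing $b_1(\cX_s)=0$ gives $H^{0,1}(\cX_s)=0$ via Hodge theory on the Kähler Calabi--Yau fibers, so local smooth representatives of the relative polarization (existing over trivializing opens of $S$) can be patched by a partition of unity, the fiberwise discrepancies on overlaps being $\ddb$-exact.

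For each $s$, Yau's theorem produces a unique normalized function $\phi_s \in C^\infty(\cX_s)$ with
\[
\omega_\cX^0|_{\cX_s} + \ddb\phi_s \;=\; \omega_{\cX_s},
\]
say under $\int_{\cX_s}\phi_s\,(\omega_\cX^0|_{\cX_s})^n=0$. Smooth dependence on $s$ follows from the implicit function theorem applied to the parametrized Monge--Ampère operator, whose linearization at a solution is the fiberwise Laplacian, invertible on the orthogonal complement of the constants. This yields $\phi \in C^\infty(\cX)$ and the candidate
\[
\wt\omega_\cX \;:=\; \omega_\cX^0 \;+\; \ddb\phi \;+\; K\, f^*\omega_S,
\]
where $\omega_S$ is a Kähler form on $S$ (for instance a multiple of $\omega^{WP}$, which is Kähler by effectiveness) and $K>0$ is to be chosen. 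Since $\ddb\phi$ restricted to $\cX_s$ equals $\ddb_s \phi_s$ and $f^*\omega_S$ vanishes on vertical vectors, the restriction $\wt\omega_\cX|_{\cX_s}$ is automatically the Ricci flat Kähler form $\omega_{\cX_s}$.

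The remaining task is to choose $K$ so that $\wt\omega_\cX$ is positive on all of $\cX$. In adapted coordinates the vertical block of the Hermitian matrix of $\wt\omega_\cX$ equals the fiber metric, hence is positive definite with uniform bounds on compact subsets of $S$; the horizontal block grows linearly in $K$; and the mixed entries are controlled by the full $C^2$-norm of $\phi$ and of $\omega_\cX^0$. A Schur complement argument then yields positivity of $\wt\omega_\cX$ once $K$ is taken sufficiently large, provided $\phi$ enjoys a uniform $C^2$-bound. This is the main obstacle and precisely where the Green's function hypothesis enters: Yau's Moser iteration for the $L^\infty$ estimate on solutions of the fiberwise Monge--Ampère equation depends only on the $C^0$-norm of the right-hand side and on a lower bound for the Green's function of $(\cX_s,\omega_\cX^0|_{\cX_s})$; given the uniform lower bound assumed, Yau's $C^0$ and $C^2$ estimates together with Evans--Krylov produce a uniform $C^{2,\alpha}$-bound on each $\phi_s$. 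Regularity in the transverse directions $s$ then follows by differentiating the Monge--Ampère equation along $\pt_s$ and $\db_s$ and applying fiberwise elliptic regularity to the resulting linear equations, giving the required bound on the mixed entries and completing the proof.
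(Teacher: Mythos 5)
Your overall architecture (a global closed $(1,1)$-form restricting to the polarization classes, fiberwise Yau potentials, then a correction pulled back from $S$) matches the paper's, and reducing positivity to the Schur complement $\varphi_{s\ol s}+K(\omega_S)_{s\ol s}>0$ is the right reduction. The gap is in how you bound $\varphi_{s\ol s}$ from below, and it is essential. You take $K$ ``sufficiently large'' depending on $C^2$-norms of $\phi$ and $\omega_\cX^0$, but those norms are uniform only on compact subsets of $S$, whereas the theorem requires a single constant valid over all of $S$ (the Green's-function hypothesis is vacuous for compact $S$; the statement is aimed at non-compact moduli). Moreover, your use of that hypothesis --- feeding it into Yau's $C^0$ estimate and then Evans--Krylov --- yields at best fiberwise estimates on $\phi_s$; it gives no uniform control of the mixed and transverse derivatives $\pt_s\pt_{\ol\beta}\phi$, $\pt_s\pt_{\ol s}\phi$ that actually enter the Schur complement, since differentiating the Monge--Amp\`ere equation in $s$ brings in $s$-derivatives of the auxiliary data $\omega_\cX^0$, which again are only locally bounded. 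As written, the argument proves the conclusion only after shrinking $S$, where it is immediate by continuity and does not use the hypothesis at all. (A smaller point: in case (b), patching local closed representatives by a partition of unity does not preserve closedness; one must patch potentials, cf.\ \cite{f-s:extremal}.)

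The paper circumvents all elliptic estimates with an exact identity. Normalizing $\omega_\cX$ by $\int_{\cX/S}\omega_\cX^{n+1}=0$ forces the fiberwise harmonic projection of $\varphi_{s\ol s}$ to vanish, and a direct computation using $d\omega_\cX=0$ and the fiberwise Ricci-flatness gives $\Box\varphi_{s\ol s}=-\Theta_{s\ol s}+A_s\cdot A_{\ol s}$ with $\Theta_{s\ol s}$ constant along each fiber. Hence $\varphi_{s\ol s}=G_s(A_s\cdot A_{\ol s})\ge -c\int_{\cX_s}A_s\cdot A_{\ol s}\,g\,dV=-c\,G^{WP}_{s\ol s}$, where $-c$ is the assumed uniform lower bound for the Green's kernel. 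The required correction is therefore exactly $c\,f^*\omega^{WP}$: canonical, uniform over $S$, and obtained without any $C^2$ theory. To repair your route you would need to replace the soft ``$K$ large'' step by a pointwise lower bound of this kind; the identity for $\Box\varphi_{s\ol s}$ (together with the curvature formula $2\pi c_1(K_{\cX/S},g^{-1})=\frac{1}{{\rm vol}\,\cX_s}f^*\omega^{WP}$ obtained by integrating it over the fibers) is the missing ingredient.
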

The \ka\ form $\wt\omega_\cX$ will be equal to $\omega_\cX + c \cdot f^*\omega^{WP}$ for some $c>0$, and $\omega_\cX$ given in Proposition~\ref{pr:omegaX} below.

In particular the assumption on the existence of $\lambda_\cX$ is satisfied if the total space $\cX$ is a compact \ka\ manifold (with the \ka\ class inducing the polarization). The effectiveness of the family is needed so that $\wt\omega_\cX$ is positive definite rather than positive semi-definite.

Concerning the existence of such an estimate for the Green's function required in  Theorem~\ref{th} the following is known. It holds, if the diameter of the fibers is uniformly bounded, or if a uniform isoperimetric inequality holds by the work or J.~Cheeger \cite{ch}, Cheeger-Yau \cite{c-y}, Chr.~Croke \cite{cr} and others. (Note that the volume of the fibers is constant).

For families of Calabi-Yau manifolds the latter questions were investigated recently by Sh.~Takayama \cite{ta}, and by X.~Rong and Y.~Zhang \cite{r-z}, who give a positive answer for the projective case, furthermore in \cite{zhang} for moduli of polarized Calabi-Yau manifolds, and  V.~Tosatti \cite{tosatti16}.

\section{Polarized families of \ka\ manifolds}
Let $X$ be a compact \ka\ manifold and $\lambda_X\in  H^1(X,\Omega^1_X)_\R\subset  H^2(X, \R)$ a \ka\ class. We summarize some facts about deformations of polarized \ka\ manifolds (cf.\ \cite{f-s:extremal,Sch1}).

A {\em holomorphic family of compact polarized manifolds} $\{(\cX_s,\lambda_{\cX_s})\}_{s\in S}$, parameterized by a reduced complex space $S$, is  given by a proper, holomorphic submersion $f :\cX \to S$ such that $\cX_s = f^{-1}(s)$ for $s\in S$ together with a section $\lambda_{\cX/S} \in R^1f_* \Omega^1_{\cX/S}(S)$, whose restrictions $\lambda_{\cX/S}|\cX_s$ are equal to the polarizations $\lambda_{\cX_s}$. In \cite{f-s:extremal} we showed that a polarization $\lambda_{\cX/S}$ can be described in an alternative way requiring that $\lambda_{\cX/S} \in R^2f_* \R(S)$ with the property that again the restrictions to fibers $\cX_s$ are the given \ka\ classes.

At least locally, with respect to the base, polarizations for holomorphic families can be represented by \ka\ forms on the total space. After replacing $S$ by a neighborhood of a given point, any polarization $\lambda_{\cX/S}$ can be represented by a \ka\ form $\omega_\cX$. (For singular spaces, by definition, the existence of local differentiable $\ddb$-potentials for \ka\ forms is always required).

A {\em deformation} of a compact polarized manifold $(X,\lambda_X)$ over a complex space $(S, s_0)$ with a distinguished point $s_0\in S$ consists of a holomorphic family $(f:\cX \to S, \lambda_{\cX/S})$ together with an isomorphism $ X \to \cX_{s_0}$ that takes $\lambda_{\cX_{s_0}}$ to $\lambda_X$.

{\em Infinitesimal deformations} are characterized as follows. Let $\cT_X$ be the sheaf of holomorphic vector fields on $X$.
The \ka\ class $\lambda_X\in H^1(X, Hom_{\cO_X}(\cT_X, \cO_X))$ defines the equivalence class of an extension, given by the {\em Atiyah sequence}
\begin{equation*}\label{eq:atiyah}
0 \to \cO_X \to \Sigma_X \to \cT_X \to 0.
\end{equation*}
We note that the edge homomorphisms of the induced cohomology sequence are defined by the cup product with the polarization:
$$
\cup\lambda_X : H^q(X, \cT_X) \to H^{q+1}(X,\cO_X).
$$
The kernel of the map $\cup \lambda_X: H^1(X,\cT_X) \to H^2(X,\cO_X)$  can be identified with the space $H^1(X,\cT_X)_{\lambda_X}$ of infinitesimal deformations of the polarized manifold $(X,\lambda_X)$ (cf.\ Section~\ref{se:curv}).

\section{Deformations of \CY manifolds}
Various definitions of \CY manifolds are common, we will use the following most general definition.
\begin{definition}
  A compact \ka\ manifold $X$ with vanishing first real Chern class is called \CY manifold.
\end{definition}
According to Yau's theorem, for any polarized \CY manifold $(X,\lambda_X)$ there exists a unique Ricci flat \ka\ form $\omega_X$ in the \ka\ class $\lambda_X$.

All complex spaces will be reduced unless stated otherwise. We do not assume that the total space $\cX$ of a family of \CY manifolds is \ka. We first give a necessary condition for the family of \ke metrics to be induced by a $(1,1)$-form on the total space, which is also sufficient for the main theorem.

\begin{proposition}[cf.\ {\cite[Prop.~3.6]{f-s:extremal}}]\label{pr:omegaX}
\begin{itemize}
\item[(a)]
Let $(f:\cX \to S, \lambda_{\cX/S})$ be a polarized family of \CY manifolds of dimension $n$, and let $\omega_{\cX_s}\in \lambda_{\cX_s}$ be the \ke forms. Then locally with respect to $S$ there exists a $d$-closed, real $(1,1)$-form $\omega_\cX$ on the total space $X$ such that
\begin{equation}\label{eq:restr}
\omega_\cX|\cX_s = \omega_{\cX_s}.
\end{equation}
\item[(b)]
If the polarization  can be represented by an element $\wt \lambda\in H^{1,1}(\cX)_\R\to H^2(\cX,\R)\to R^2f_*\R(S)$  on the total space, then $\omega_\cX$ can be chosen globally.
\item[(c)]
Such a global form $\omega_\cX$ in the class of $\wt\lambda$ exists, satisfying the additional equation
\begin{equation}\label{eq:norm}
\int_{\cX/S} \omega^{n+1}_\cX =0,
\end{equation}
by which it is uniquely determined.
\item[(d)]
The assumption in {\rm(b)} is not needed, if the first Betti numbers $b_1(\cX_s)$ vanish.
\end{itemize}
\end{proposition}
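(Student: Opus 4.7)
The plan is to apply Yau's theorem fiberwise to the restriction of a representative of the polarization, and then to show that the family of Monge-Amp\`ere potentials $\phi_s$ assembles into a smooth function on the total space.

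For (a) and (b), after shrinking $S$ (case (a)) or working globally (case (b)), I would choose a $d$-closed real $(1,1)$-form $\omega_\cX^0$ representing the polarization: in (a) it is a local K\"ahler form, while in (b) $\omega_\cX^0$ need not be positive on $\cX$ but is K\"ahler on each fiber because $\wt\lambda|\cX_s = \lambda_{\cX_s}$. On each fiber, Yau's theorem together with the $\pt\db$-lemma on the compact K\"ahler manifold $\cX_s$ produces a unique $\phi_s$, normalized by $\int_{\cX_s} \phi_s (\omega_\cX^0|\cX_s)^n = 0$, such that $\omega_{\cX_s} = \omega_\cX^0|\cX_s + \ddb \phi_s$ is Ricci-flat. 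Smooth dependence of $\phi_s$ on $s$ follows from the implicit function theorem applied to the Monge-Amp\`ere operator in H\"older spaces (its linearization being an invertible Laplacian on mean-zero functions), so $\phi(x) := \phi_{f(x)}(x)$ is smooth on $\cX$, and $\omega_\cX := \omega_\cX^0 + \ddb \phi$ is the required $d$-closed real $(1,1)$-form.

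For (c), I would classify the residual freedom. Two representatives in the class of $\wt\lambda$ with the prescribed fiber restrictions differ by $\ddb u$ for some $u : \cX \to \R$ with $u|\cX_s$ pluriharmonic, hence constant on the compact fiber, so $u = f^*\psi_0$ for a function $\psi_0$ on $S$. The projection formula together with the fact that $\int_{\cX/S} \omega_\cX^n = V$ is a positive constant gives
\begin{equation*}
\int_{\cX/S} (\omega_\cX + f^*\ddb \psi_0)^{n+1} = \int_{\cX/S} \omega_\cX^{n+1} + (n+1) V \ddb \psi_0,
\end{equation*}
since the terms $(f^*\ddb\psi_0)^k$ for $k \geq 2$ paired with $\omega_\cX^{n+1-k}$ yield fiber integrals of negative degree. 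The normalization $\int_{\cX/S} \omega_\cX^{n+1} = 0$ then determines $\ddb \psi_0$, and hence $\omega_\cX$, uniquely.

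For (d), without the global class assumption I would glue the local constructions of (a) across a cover $\{U_i\}$ of $S$. On overlaps the differences $\omega_\cX^{(i)} - \omega_\cX^{(j)}$ are $d$-closed real $(1,1)$-forms with vanishing fiber restriction; under the hypothesis $b_1(\cX_s) = 0$, equivalent for K\"ahler fibers to $H^{0,1}(\cX_s) = 0$, each such difference can be split as $\ddb u_{ij} + f^*\beta_{ij}$, reducing the obstruction to a \v{C}ech cocycle $\{\beta_{ij}\}$ on $S$ in the sheaf of smooth real $(1,1)$-forms. This sheaf is soft, so the cocycle is trivialized by a partition of unity, producing the global $\omega_\cX$. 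The most delicate step is precisely this gluing, where the fiberwise Hodge-type vanishing has to be leveraged to peel off a pullback from $S$ modulo $\ddb$-exact forms; smooth dependence of the Monge-Amp\`ere potentials on parameters in (a) is classical but requires care in the implicit function theorem setup.
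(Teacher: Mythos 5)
Your construction for (a) and (b) is the same as the paper's: represent the polarization by a closed real $(1,1)$-form (globally in case (b), locally over $S$ in case (a)), correct it fiberwise by the normalized Monge--Amp\`ere potentials from Yau's theorem, and obtain smooth dependence on $s$ via the implicit function theorem; the paper writes $\omega_\cX=\tau+\ddb\varphi$ with the fiberwise harmonic projections of $\varphi_s$ normalized to zero, which matches your mean-zero normalization up to bookkeeping. (Only note that $\tau|\cX_s$ need not itself be a K\"ahler \emph{form}, merely a closed representative of the K\"ahler class; your appeal to the $\pt\db$-lemma covers this.) For (c), where the paper simply defers to \cite{f-s:extremal}, your uniqueness argument via the projection formula is correct; for existence you still have to produce a global $\psi_0$ on $S$ with $\ddb\psi_0$ equal to the prescribed $(1,1)$-form $-\frac{1}{(n+1)V}\int_{\cX/S}\omega_\cX^{n+1}$, and the efficient way to get this is to solve locally on $S$ and glue the local solutions using the uniqueness you just proved.

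The genuine gap is in (d). First, $\{\beta_{ij}\}$ is not an honest \v{C}ech cocycle: the splitting $\omega^{(i)}_\cX-\omega^{(j)}_\cX=\ddb u_{ij}+f^*\beta_{ij}$ is only unique up to moving a term $f^*(\ddb\psi)$ between the two summands, so the cocycle identity for the $\beta_{ij}$ holds only modulo $\ddb$-exact forms on $S$. More seriously, even for a genuine cocycle, trivializing it in the soft sheaf of \emph{all} smooth real $(1,1)$-forms, $\beta_{ij}=\gamma_i-\gamma_j$, produces $\gamma_i$ that are in general not closed, so the glued form $\omega^{(i)}_\cX-f^*\gamma_i$ satisfies $d\bigl(\omega^{(i)}_\cX-f^*\gamma_i\bigr)=-f^*d\gamma_i\ne 0$ and is no longer an admissible representative. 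The trivialization has to take place in the sheaf of locally $\ddb$-exact $(1,1)$-forms on $S$, which is \emph{not} soft; the obstruction lives in $H^2(S,\cP_S)$, where $\cP_S$ is the sheaf of pluriharmonic functions on $S$, and showing that it vanishes in the present situation is precisely the content of \cite[Cor.~3.7]{f-s:extremal}, which is what the paper cites for (d). Your use of $b_1(\cX_s)=0$ enters in the right place --- via $R^1f_*\R=0$ and $R^1f_*\cO_\cX=0$ it does reduce the differences $\omega^{(i)}_\cX-\omega^{(j)}_\cX$ to pullbacks of $\ddb$-exact forms from $S$ --- but the final descent is not the routine partition-of-unity step you describe.
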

\begin{proof}
The local statement (a) is contained in \cite{f-s:extremal}, the statement (b) about the global existence of $\omega_\cX$ can be seen as follows: Let the polarization $\lambda_{\cX/S}$ be represented by a locally $\pt\ol\pt$-exact, real $(1,1)$-form $\tau$. In the notation of \cite{f-s:extremal} the form $\tau$ is a section of $\Phi_{\cX}$. The restrictions $\tau|\cX_s$ represent the unique \ke forms $\omega_{\cX_s}$ so that $\omega_{\cX_s}=\tau|\cX_s + \ddb \varphi_s $ for $\cinf$ functions $\varphi_s$ on the fibers, whose fiberwise harmonic projections vanish so that these depend upon $s\in S$ in a $\cinf$ way. (For singular base spaces one can use the implicit function theorem in the sense of \cite[Section 6]{f-s:extremal}) in order to verify differentiable dependence upon the parameter. Let $\varphi$ denote the resulting function on $\cX$. We set $\omega_\cX= \tau + \ddb \varphi$.

Condition \eqref{eq:norm} can be achieved like in \cite[Prop.~3.6]{f-s:extremal}, where also the uniqueness is shown.

The statement (d) follows from \cite[[Cor.~3.7]{f-s:extremal}.
\end{proof}
Locally $\pt\ol\pt$-exact forms $\omega_\cX$ satisfying \eqref{eq:restr} (for not necessarily smooth base spaces $S$) had been introduced as {\em admissible} forms in \cite[Section 3]{f-s:extremal}, where the existence of such forms was being studied. Because of the Tian-Todorov theorem \cite{tian,todorov} about the unobstructedness of infinitesimal deformations we can restrict ourselves to smooth base spaces, when we compute curvatures and related tensors.

We will use the following notation. Local holomorphic coordinates on $X$ are denoted by $(z^1,\ldots,z^n)$ (with Greek indices), whereas local coordinates on the base $S$ are denoted by $(s^1,\ldots,s^k)$ with Latin indices.

We will write
$$
\omega_{\cX_s}= \ii g_{\alpha\ol\beta}(z,s)dz^\alpha\we dz^\ol\beta
$$
for the \ke forms of vanishing Ricci curvature representing the \ka\ classes $\lambda_{\cX_s}$, and
$$
\omega_\cX = \ii\left(g_{\alpha\ol\beta} dz^\alpha\we dz^{\ol\beta} + g_{i\ol\beta} ds^i\we dz^{\ol\beta} + g_{\alpha\ol\jmath} dz^\alpha\we ds^{\ol\jmath} + g_{i\ol\jmath} ds^i\we ds^{\ol\jmath}  \right).
$$
The $k$-th power of a differential form by definition is equal to the $k$-th exterior product, divided by $k!$.

\subsection{Metric characterization of the \ks map}\label{se:mcharwp} Let $v$ be a  differentiable section of $\cT_\cX \to f^*\cT_S$ for
$$
0 \to \cT_{\cX/S}\to \cT_\cX \stackrel{\stackrel{v}{\longleftarrow}}{\longrightarrow} f^*\cT_S \to 0.
$$
Then, given $s_0\in S$, the exterior derivative $\db\/ v$ defines a homomorphism from $T_{s_0}S$ to the space of $\db$-closed $(0,1)$-forms with values in the sheaf $\cT_{\cX_{s_0}}$. In this way the \ks map is defined in terms of Dolbeault cohomology:
$$
\xymatrix{T_{s_0}S \ar@{}[d]   \ar[r]^{\rho_{s_0}} & H^1(X,\cT_X)\\
\pt/\pt s^i \ar@{}[u]|{\rotatebox{90}{$\in$}} \ar@{|->}[r]& [ \db(v(\pt/\pt s^i))|\cX_{s_0}]\;.\ar@{}[u]|{\rotatebox{90}{$\in$}}
}
$$
Given the situation of Proposition~\ref{pr:omegaX}, let $\omega_\cX$ be a real, closed $(1,1)$-form, whose restrictions to all fibers are the Ricci flat \ka\ metrics $\omega_{\cX_s}$ representing the polarizations $\lambda_{\cX_s}$. As the statement of \eqref{eq:relcan} does not depend upon the global existence of $\omega_\cX$, we do not have to assume that $\cX$ possesses a global \ka\ form.

A particular lift of a tangent vector $\pt/\pt s^i$ to $\cX$ is the {\em horizontal lift}
\begin{equation}\label{eq:vi}
v_i = \pt/\pt s^i + a^\alpha_i \pt/\pt z^\alpha,
\end{equation}
which is characterized by the condition that $v_i$ is perpendicular to the respective fiber with respect to $\omega_\cX$.

Set $(g^{\ol\beta\alpha}) = (g_{\alpha\ol\beta})^{-1}$ and $g= \det(g_{\alpha\ol\beta})$. Furthermore, we will use the notation $\pt_i=\pt/\pt s^i$ and $\pt_\alpha=\pt/\pt z^\alpha$ etc.

Then
\begin{equation}\label{eq:horli}
a^\alpha_i= -g^{\ol\beta\alpha}g_{i\ol\beta}.
\end{equation}
Distinguished representatives of the \ks classes $\rho_{s_0}(\pt/\pt s^i)$ are
$$
A_i=A^\alpha_{i \ol\beta} \pt_\alpha dz^\ol\beta = (\db v_i) | \cX_{s_0}, \text{ i.e. } A^\alpha_{i\ol\beta}= \pt_{\ol\beta}(a^\alpha_i),
$$
where the horizontal lift $v_i$ is given by \eqref{eq:vi}.

We will need covariant derivatives {\em in fiber direction} and use the semi-colon notation $\mbox{\textvisiblespace}_{; \alpha}$ etc. For ordinary derivatives, in particular derivatives with respect to the parameter we will use the $|$-symbol like $\mbox{\textvisiblespace}_{|i}$. Also raising and lowering of indices is being used with respect to the given metrics on the fibers. We recall the following known facts.

\begin{proposition}\label{pr:harmrep}
We identify $X$ with $\cX_{s_0}$. Then the distinguished \ks forms $A_i$ satisfy the following properties.
\begin{eqnarray}
  A_i\cup \omega_{X} &=&0 \text{ i.e. } A_{i\ol\beta\ol\delta}= A_{i\ol\delta\ol\beta}\label{eq:symm} \\
   \db A_i &=&0 \text{ i.e. }
  A^\alpha_{i\ol\beta;\ol\delta}=  A^\alpha_{i\ol\delta;\ol\beta}\label{eq:clsd} \\
  \db^{\,*}\!\!A_i&=&0  \text{ i.e. } - g^{\ol\beta\gamma} A^\alpha_{i\ol\beta;\gamma}=0 \label{eq:dbclsd}
\end{eqnarray}
\end{proposition}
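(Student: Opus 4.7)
The plan is to verify the three equations in order of increasing difficulty. For \eqref{eq:clsd}: since $A^\alpha_{i\ol\beta} = \pt_{\ol\beta} a^\alpha_i$ and the Chern connection acts as the ordinary $\pt_{\ol\delta}$ on antiholomorphic indices of vector-valued $(0,1)$-forms, the mixed partial $\pt_{\ol\delta}\pt_{\ol\beta} a^\alpha_i$ is manifestly symmetric in $\ol\beta, \ol\delta$. For \eqref{eq:symm}, the closedness $d\omega_\cX = 0$ is the key input: the coefficient of $ds^i \wedge dz^{\ol\beta} \wedge dz^{\ol\delta}$ in $\db\omega_\cX$ yields the scalar identity $\pt_{\ol\delta} g_{i\ol\beta} = \pt_{\ol\beta} g_{i\ol\delta}$. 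Working at a point $p$ in normal coordinates for the fiber metric (so that $g_{\alpha\ol\beta}(p) = \delta_{\alpha\beta}$ and the first-order derivatives of $g_{\alpha\ol\beta}$ vanish at $p$), the expression $A_{i\ol\beta\ol\delta} = g_{\alpha\ol\delta} A^\alpha_{i\ol\beta}$ collapses to $-\pt_{\ol\beta} g_{i\ol\delta}(p)$, and the required symmetry is exactly the identity above.

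Equation \eqref{eq:dbclsd} is the hardest, and this is where Ricci-flatness enters. The plan has three steps. First, compute $\nabla_\gamma a^\alpha_i$ using the $(2,1)$-consequence $\pt_\gamma g_{i\ol\beta} = \pt_i g_{\gamma\ol\beta}$ of $d\omega_\cX = 0$ together with fiber Kählerness; the Christoffel contribution cancels a piece of $\pt_\gamma a^\alpha_i$, leaving the clean formula $\nabla_\gamma a^\alpha_i = -g^{\ol\beta\alpha}\pt_i g_{\gamma\ol\beta}$. Second, the commutator $[\nabla_\gamma,\nabla_{\ol\beta}] a^\alpha_i$, after tracing with $g^{\ol\beta\gamma}$, becomes the Ricci tensor times $a^\delta_i$, which vanishes by Ricci-flatness; hence $g^{\ol\beta\gamma} A^\alpha_{i\ol\beta;\gamma} = g^{\ol\beta\gamma}\nabla_{\ol\beta}\nabla_\gamma a^\alpha_i$. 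Third, substitute the clean formula and evaluate at $p$ in normal coordinates; this reduces the task to showing that $\sum_\gamma \pt_i\pt_{\ol\gamma} g_{\gamma\ol\alpha}|_p = 0$.

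The crux is this last vanishing, and here the compactness of the fibers enters decisively. From Ricci-flatness $\pt_\alpha\pt_{\ol\beta}\log g = 0$ on each fiber, the function $\pt_i\log g$ is pluriharmonic in the fiber direction; on a compact Kähler manifold such a function is constant, so $\pt_i\log g$ depends only on $s$. Consequently $\pt_{\ol\alpha}\pt_i\log g = 0$ identically, and expanding this via the Jacobi formula $\pt_i\log g = g^{\ol\beta\nu}\pt_i g_{\nu\ol\beta}$ at $p$ gives $\sum_\nu\pt_{\ol\alpha}\pt_i g_{\nu\ol\nu}|_p = 0$. A short manipulation using the fiber-Kähler identity $\pt_{\ol\gamma} g_{\gamma\ol\alpha} = \pt_{\ol\alpha} g_{\gamma\ol\gamma}$ rewrites $\sum_\gamma \pt_i\pt_{\ol\gamma} g_{\gamma\ol\alpha}|_p$ into the same trace form, completing the argument. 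The main obstacle I anticipate is managing signs through the Christoffel cancellation in the first step and keeping track of the chain of fiber-Kähler and closedness identities used throughout.
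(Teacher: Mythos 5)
Your treatment of \eqref{eq:symm} and \eqref{eq:clsd} is correct and coincides with the paper's argument (both reduce to $A_{i\ol\beta\ol\delta}=-\pt_{\ol\delta}g_{i\ol\beta}$ together with the closedness of $\omega_\cX$), and the first two steps of your proof of \eqref{eq:dbclsd} reproduce the computation of Lemma~\ref{le:ATh}: after commuting covariant derivatives and using Ricci flatness, $\db^{\,*}\!A_i$ reduces to $g^{\ol\delta\alpha}\Theta_{i\ol\delta}\pt_\alpha$ with $\Theta_{i\ol\delta}=\pt_i\pt_{\ol\delta}\log g$, so everything hinges on the vanishing of this mixed curvature of the relative canonical bundle.

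That last vanishing is where there is a genuine gap. You argue that $\pt_i\log g$ is fiberwise pluriharmonic and hence constant on the compact fiber; but $g=\det(g_{\alpha\ol\beta})$ is only a local quantity. Under a change of fiber coordinates $z=z(w,s)$ it is multiplied by $|\!\det(\pt z/\pt w)|^2$, so $\pt_i\log g$ changes by $\pt_i\log\det(\pt z/\pt w)+(\pt z^\alpha/\pt s^i)\,\pt_\alpha\log g$, a fiberwise holomorphic but in general non-constant function. Hence $\pt_i\log g$ is not a function on $\cX_s$, and the maximum principle cannot be applied to it. The globally defined object is the $(0,1)$-form $\Theta_{i\ol\beta}\,dz^{\ol\beta}$, which your computation (and Lemma~\ref{le:theta}) shows to be antiholomorphic, hence harmonic, and \emph{locally} $\db$-exact; but a harmonic form that is only locally $\db$-exact need not vanish (compare $d\ol z^1$ on a torus), and Calabi--Yau fibers with $h^{0,1}\neq 0$ --- tori and products with tori --- are allowed by the paper's definition. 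Some genuinely global input is required: the paper integrates $\Box\chi_{i\ol\jmath}=-2g^{\ol\beta\alpha}\Theta_{i\ol\beta}\Theta_{\alpha\ol\jmath}$ over the fiber, where $\chi_{i\ol\jmath}=\langle v_i,v_j\rangle_\Theta$ \emph{is} a global function, to conclude $\Theta_{i\ol\beta}=0$; equivalently, one can observe that $\db^{\,*}\!A_i$ is a holomorphic vector field (by Calabi's parallelism, Lemma~\ref{le:par}) lying in the image of $\db^{\,*}$, hence $L^2$-orthogonal to itself. As written, your argument is complete only under the additional hypothesis $b_1(\cX_s)=0$.
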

\begin{proof}
  We have $A_i \cup \omega_X = A_{i\ol\beta\ol\delta}\,dz^\ol\beta\we dz^\ol\delta$, where $A_{i\ol\beta\ol\delta}= a_{i\ol\beta;\ol\delta}=- g_{i\ol\beta;\ol\delta}$, which implies the first two equations \eqref{eq:symm} and \eqref{eq:clsd}. The $\ol\pt^*$\!-closedness \eqref{eq:dbclsd} will be shown below.
\end{proof}
Note that \eqref{eq:symm} implies that the class of $A_i$ is contained in $H^1(X, \cT_{X})_{\lambda_{X}}$.

\subsection{Tensors on \CY manifolds}
We will need various properties for tensors on \CY manifolds, some of which are already mentioned in E.~Calabi's original paper \cite{calabi}.
\begin{lemma}[{\cite{calabi}}]\label{le:par}
  Any holomorphic vector field, any holomorphic $1$-form, and any general holomorphic tensor are parallel.
\end{lemma}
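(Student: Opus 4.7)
The plan is to apply the classical Bochner--Kodaira technique on the compact Ricci-flat Kähler manifold $X$, in three cumulative steps.

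First, for a holomorphic $(1,0)$-form $\alpha$, a type decomposition shows $\alpha$ is both $\partial$- and $\bar\partial$-closed, hence harmonic for the Hodge Laplacian. The standard Bochner identity on $\Omega^{1,0}_X$ for a compact Kähler manifold yields, after integration by parts,
$$
0=(\Delta\alpha,\alpha)_{L^2}=\|\nabla\alpha\|^{2}+\int_X R_{\gamma\bar\delta}\,\alpha^{\gamma}\bar\alpha^{\delta}\,dV,
$$
and Ricci-flatness of $\omega_X$ annihilates the second summand, forcing $\nabla\alpha=0$. Dualising $\alpha$ via the Kähler metric---or equivalently running the same Bochner argument on $T^{1,0}X$ with the Chern connection---yields parallelism of any holomorphic vector field.

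For a general holomorphic tensor $T$, viewed as a holomorphic section of the Chern-flat tensor-built bundle $E=(T^{1,0}X)^{\otimes p}\otimes(\Omega^{1,0}_X)^{\otimes q}$, I would apply the Bochner--Kodaira--Nakano identity for the Chern connection on $E$:
$$
\bar\partial^{*}\bar\partial T=\nabla^{(1,0)*}\nabla^{(1,0)}T+[i\Lambda F_E,T].
$$
The curvature endomorphism $F_E$ is produced tensorially from the Chern curvature of $T^{1,0}X$ via the Leibniz rule, and its contraction with $\omega_X$ collapses, factor by factor, into a sum of Ricci contractions. Ricci-flatness therefore kills $[i\Lambda F_E,T]$; pairing with $T$ and integrating over $X$ then forces $\nabla^{(1,0)}T=0$, which together with $\bar\partial T=0$ gives $\nabla T=0$.

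The main obstacle is the verification that, on an arbitrary tensor power $E$, the contraction $i\Lambda F_E$ indeed reduces to a purely Ricci-type expression; this is a direct but slightly tedious indicial calculation starting from $\Lambda F_{T^{1,0}X}=-\mathrm{Ric}$ and extending by derivations. A conceptually cleaner shortcut is the holonomy principle: a compact Ricci-flat Kähler manifold has restricted holonomy in $SU(n)$, so any holomorphic tensor corresponds to a holonomy-invariant element at a point, which extends to a parallel tensor globally; uniqueness of the holomorphic extension identifies this parallel lift with $T$ itself, completing the proof.
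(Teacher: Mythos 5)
Your proposal is correct and is essentially the paper's own argument: the paper proves the lemma by the same Bochner-type computation, integrating $|\nabla\xi|^2$ by parts and commuting covariant derivatives so that Ricci-flatness and holomorphicity kill all terms, with the general tensor case handled ``in the same way'' exactly as in your tedious-but-direct indicial calculation.
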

\begin{proof}
  Let $\xi^\alpha \pt_\alpha$ be a holomorphic vector field. Then
  $$
  \int_X \xi_{\ol\beta;\gamma}\ol\xi_{\alpha;\ol\delta}g^{\ol\beta\alpha}g^{\ol\delta\gamma} g dV =
  -\int_X  \xi_{\ol\beta;\gamma\ol\delta}\ol\xi_{\alpha}g^{\ol\beta\alpha}g^{\ol\delta\gamma} g dV =
  -\int_X g^{\ol\beta\alpha}\left(g^{\ol\delta\gamma} \xi_{\ol\beta;\ol\delta\gamma} - \xi_\ol\tau R^\ol\tau_{\;\; \ol\beta}\right)\xi_\alpha g\,dV=0
  $$
  because of the holomorphicity of $\xi^\alpha$ and the Ricci flatness of the metric. So $\xi_{\ol\beta;\gamma}=0$. The rest follows in the same way.
\end{proof}

\subsection{Curvature form of the relative canonical bundle}\label{se:curv}
Let a holomorphic family $f:\cX \to S$ of \CY manifolds be given together with a form  $\omega_\cX$ with the properties of Proposition~\ref{pr:omegaX}.

Let $\Theta$ be the curvature form for the relative canonical bundle. Because of the \ke condition for $\omega_{\cX_s}$ the curvature form vanishes when restricted to any fiber:
\begin{eqnarray*}
\Theta = \ddb \log g &=& \ii\left(\Theta_{i\ol\jmath}\, ds^i \we ds^\ol\jmath + \Theta_{i\ol\beta}\, ds^i\we dz^\ol\beta + \Theta_{\alpha\ol\jmath}\, dz^\alpha\we ds^\ol\jmath)\right)\\
\Theta_{\alpha\ol\beta} & = & 0
\end{eqnarray*}
Now
$
\Theta_{\alpha\ol\jmath;\ol\beta}=\Theta_{\alpha\ol\beta|\ol\jmath}=0.
$
This proves
\begin{lemma}\label{le:theta}
The forms $\Theta_{\alpha\ol\jmath}\, dz^\alpha$ are holomorphic on the fibers $\cX_s$
and
$
\Theta_{i\ol\beta;\alpha}=0.
$
\end{lemma}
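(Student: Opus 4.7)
My approach is to derive both assertions directly from $d\Theta=0$ (which holds because $\Theta=\ddb\log g$ is locally $\pt\ol\pt$-exact) together with the identity $\Theta_{\alpha\ol\beta}\equiv 0$ on the total space $\cX$. The latter has already been used in the paragraph above: the restriction of $\Theta$ to any fiber $\cX_s$ is the Ricci form of the Ricci-flat metric $\omega_{\cX_s}$, so $\pt_\alpha\pt_{\ol\beta}\log g(z,s)=0$ holds pointwise on $\cX$, not merely on a single fiber. In particular all partial derivatives of $\Theta_{\alpha\ol\beta}$ in base and fiber directions vanish as well.

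The first assertion is essentially immediate from the line preceding the lemma: the identity $\Theta_{\alpha\ol\jmath;\ol\beta}=\Theta_{\alpha\ol\beta|\ol\jmath}=0$, extracted from the mixed $(1,2)$-piece of $\ol\pt\Theta=0$ and the global vanishing of $\Theta_{\alpha\ol\beta}$, says precisely that
\begin{equation*}
\ol\pt\bigl(\Theta_{\alpha\ol\jmath}dz^\alpha\bigr)\big|_{\cX_s}=\pt_{\ol\beta}\Theta_{\alpha\ol\jmath}\,dz^{\ol\beta}\we dz^\alpha=0,
\end{equation*}
where I use that $\pt_{\ol\beta}$ agrees with $\nabla_{\ol\beta}$ on a tensor carrying only a lower holomorphic fiber index, since the mixed Christoffels of a \ka\ metric vanish. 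This is holomorphicity of $\Theta_{\alpha\ol\jmath}dz^\alpha$ on each fiber.

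For the second assertion I would repeat the argument with the conjugate half $\pt\Theta=0$ of $d\Theta=0$. Extracting the $(2,1)$-piece in the mixed basis $dz^\alpha\we ds^i\we dz^{\ol\beta}$ yields, up to antisymmetry signs,
\begin{equation*}
\pt_\alpha\Theta_{i\ol\beta}=\pt_i\Theta_{\alpha\ol\beta}=0,
\end{equation*}
the right-hand side vanishing by the global identity $\Theta_{\alpha\ol\beta}\equiv 0$. The left-hand side rewrites in covariant form as $\Theta_{i\ol\beta;\alpha}$, because $\nabla_\alpha$ applied to a tensor with a single antiholomorphic fiber index $\ol\beta$ (with the base index $i$ treated as a passive parameter label) reduces to $\pt_\alpha$ on a Kähler fiber. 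No serious obstacle arises — the whole argument is index bookkeeping for the mixed components of $d\Theta$; the only delicate point worth flagging is the use of the \emph{global} (not merely fiberwise) vanishing of $\Theta_{\alpha\ol\beta}$, which is what licenses differentiation of this identity in base directions.
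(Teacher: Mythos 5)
Your argument is correct and is essentially the paper's own proof: the paper also derives both identities by commuting the partial derivatives of $\log g$ (i.e.\ $\Theta_{\alpha\ol\jmath;\ol\beta}=\Theta_{\alpha\ol\beta|\ol\jmath}=0$ and its conjugate analogue), using the global vanishing of $\Theta_{\alpha\ol\beta}$ and the fact that the mixed Christoffel symbols of the \ka\ fiber metrics vanish. Your write-up merely makes explicit the bookkeeping that the paper compresses into one line.
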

By Lemma~\ref{le:par}
\begin{eqnarray}
  \Theta_{\alpha\ol\jmath;\gamma}  &=& 0 \label{eq:thag} \\
  \Theta_{i\ol\beta;\ol\delta} &=& 0 \label{eq:thbd}
\end{eqnarray}
We note a consequence of Lemma~\ref{le:par}: Again $X$ denotes a fiber $\cX_s$. The cup product $\cup \lambda_X: H^0(X,\cT_X) \to H^1(X, \cO_X)$ can be represented by the cup product with $\omega_X$ that maps holomorphic vector fields to harmonic $(0,1)$-forms. This map is an isomorphism. In particular we have an exact sequence
$$
0 \to H^1(X, \Sigma_X) \to H^1(X,\cT_X) \stackrel{\cup \lambda_X}{\longrightarrow} H^2(X,\cO_X),
$$
which implies that the space of infinitesimal deformations of the polarized manifold $(X,\lambda_X)$, namely the kernel of the cup product with the polarization on $H^1(X,\cT_X)$ with values in $H^2(X,\cO_X)$, can be identified with $H^1(X, \Sigma_X)$.
\begin{lemma}\label{le:ATh}
  \begin{equation}
    \ol\pt^{*}\! (A^\alpha_{i\ol\beta}\, \pt_\alpha dz^{\ol\beta}) = \Theta_{i\ol\beta}\,g^{\ol\beta\alpha}\,\pt_\alpha.
  \end{equation}
\end{lemma}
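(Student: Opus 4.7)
The plan is to verify the identity pointwise at an arbitrary $p\in\cX_{s_0}$, working in holomorphic normal coordinates adapted to the Ricci-flat fiber metric, so that $g_{\alpha\ol\beta}(p)=\delta_{\alpha\ol\beta}$ and all first derivatives of $g_{\alpha\ol\beta}$ vanish at $p$. Since both sides of the claimed equality are components of honest tensors on $\cX_{s_0}$, pointwise equality at $p$ suffices.

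Since the fiber Christoffel symbols vanish at $p$, the codifferential reduces to
\[
(\dbs A_i)^\alpha(p)=-\sum_\gamma \pt_\gamma A^\alpha_{i\ol\gamma}(p)=-\sum_\gamma \pt_\gamma\pt_{\ol\gamma}\,a^\alpha_i(p),
\]
using $A^\alpha_{i\ol\beta}=\pt_{\ol\beta}a^\alpha_i$. Substituting $a^\alpha_i=-g^{\ol\tau\alpha}g_{i\ol\tau}$, every intermediate term containing a single first derivative of $g_{\sigma\ol\mu}$ drops out at $p$, and the identity $\pt_\gamma\pt_{\ol\gamma}g^{\ol\tau\alpha}(p)=-\pt_\gamma\pt_{\ol\gamma}g_{\tau\ol\alpha}(p)$ (obtained by twice differentiating $g^{\ol\tau\sigma}g_{\sigma\ol\alpha}=\delta^{\ol\tau}_{\ol\alpha}$) leaves
\[
(\dbs A_i)^\alpha(p)=\sum_\gamma \pt_\gamma\pt_{\ol\gamma}g_{i\ol\alpha}(p)-\sum_{\gamma,\tau}\pt_\gamma\pt_{\ol\gamma}g_{\tau\ol\alpha}(p)\,g_{i\ol\tau}(p).
\]

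The next step is to invoke $d\omega_\cX=0$ in the form $g_{\alpha\ol\beta|i}=g_{i\ol\beta|\alpha}$ and $g_{i\ol\alpha|\ol\gamma}=g_{i\ol\gamma|\ol\alpha}$ (together with the pure-fiber identity $g_{\alpha\ol\beta|\gamma}=g_{\gamma\ol\beta|\alpha}$). Iterating these yields the commutations
\[
\pt_\gamma\pt_{\ol\gamma}g_{i\ol\alpha}=\pt_i\pt_{\ol\alpha}g_{\gamma\ol\gamma}\quad\text{and}\quad \pt_\gamma\pt_{\ol\gamma}g_{\tau\ol\alpha}=\pt_\tau\pt_{\ol\alpha}g_{\gamma\ol\gamma}.
\]
Summing the first equality over $\gamma$ and evaluating at $p$ identifies it with $\pt_i\pt_{\ol\alpha}\log g(p)=\Theta_{i\ol\alpha}(p)$ (the remaining cross terms in the expansion of $\log\det$ vanish in normal coordinates). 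The second sum becomes $\pt_\tau\pt_{\ol\alpha}\log g(p)\cdot g_{i\ol\tau}(p)$ summed over $\tau$; but $\pt_\tau\pt_{\ol\alpha}\log g(p)$ is the negative Ricci curvature of the fiber at $p$, which vanishes by $c_{1,\R}(\cX_s)=0$ together with Yau's theorem. Therefore $(\dbs A_i)^\alpha(p)=\Theta_{i\ol\alpha}(p)=\Theta_{i\ol\beta}(p)g^{\ol\beta\alpha}(p)$, as desired.

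The main obstacle is organizational: isolating which Kähler symmetry is used at each step and accounting for every term produced by differentiating $g^{\ol\tau\alpha}$ or by Christoffel contributions at $p$. The only geometric inputs are $d\omega_\cX=0$ and the Ricci-flatness of the fibers; no further data is needed.
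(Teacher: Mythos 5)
Your proof is correct and follows essentially the same route as the paper: starting from $\dbs A_i=-g^{\ol\beta\gamma}A^\alpha_{i\ol\beta;\gamma}$ with $A^\alpha_{i\ol\beta}=\pt_{\ol\beta}a^\alpha_i$ and $a^\alpha_i=-g^{\ol\beta\alpha}g_{i\ol\beta}$, then using $d\omega_\cX=0$ and fiberwise Ricci-flatness to land on $g^{\ol\delta\alpha}(\log g)_{|i\ol\delta}=g^{\ol\delta\alpha}\Theta_{i\ol\delta}$. The only difference is presentational: the paper performs the computation covariantly, with the Ricci term appearing when commuting covariant derivatives, whereas you work in K\"ahler normal coordinates at a point, where the same Ricci term arises from the second derivatives of $g^{\ol\tau\alpha}$.
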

\begin{proof}
  \begin{gather*}
  -g^{\ol\beta\gamma}A^\alpha_{i\ol\beta;\gamma}=-g^{\ol\delta\alpha}g^{\ol\beta\gamma} A_{i\ol\delta\ol\beta;\gamma} \equov{eq:symm}-g^{\ol\delta\alpha}g^{\ol\beta\gamma} A_{i\ol\beta\ol\delta;\gamma} \equov{eq:horli} g^{\ol\delta\alpha}g^{\ol\beta\gamma} g_{i\ol\beta;\ol\delta\gamma}=\\
   g^{\ol\delta\alpha}\left(g^{\ol\beta\gamma} g_{i\ol\beta;\gamma\ol\delta}  + g_{i\ol\tau} R^\ol\tau_{\;\ol \delta } \right)=  g^{\ol\delta\alpha}(g^{\ol\beta\gamma} g_{\gamma\ol\beta|i})_{;\ol\delta} =  g^{\ol\delta\alpha} (\log g )_{|i\ol\delta} = g^{\ol\delta\alpha}\Theta_{i\ol\delta}.
  \end{gather*}
\end{proof}
Now we consider the hermitian product $\langle \mbox{\textvisiblespace},\mbox{\textvisiblespace}\rangle_\Theta$ on $\cT_\cX$ that is defined by $\Theta$, (which is not positive definite).

Let
\begin{equation}\label{eq:chi}
\chi_{i\ol\jmath}:= \langle v_i, v_j\rangle_\Theta = \Theta_{i\ol\jmath} - a^\alpha_i \Theta_{\alpha\ol\jmath} - \Theta_{i\ol\beta} a^\ol\beta_{\ol\jmath}  .
\end{equation}
We compute the (fiberwise) Laplacian of $\chi_{i\ol \jmath}$, where $\Box$ denotes the Laplacian for functions with respect to $\omega_{\cX_s}$.
\begin{proposition}
\begin{equation}\label{eq:laplchi}
  \Box \chi_{i\ol\jmath}= - 2 g^{\ol\beta\alpha} \Theta_{i\ol\beta}\Theta_{\alpha\ol\jmath}.
\end{equation}
\end{proposition}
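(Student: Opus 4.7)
The plan is to expand $\chi_{i\ol\jmath}$ by \eqref{eq:chi}, take two covariant fiber derivatives, contract with $g^{\ol\delta\gamma}$, and exploit the parallelism of $\Theta_{\alpha\ol\jmath}$ and $\Theta_{i\ol\beta}$ together with Lemma~\ref{le:ATh} in order to identify the right-hand side.

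First, I would note that $\Theta_{\alpha\ol\jmath}$ and $\Theta_{i\ol\beta}$ are in fact parallel on the fiber in \emph{both} the $\gamma$- and $\ol\delta$-directions. Indeed, $d\Theta=0$ together with $\Theta_{\alpha\ol\beta}=0$ forces $\Theta_{\alpha\ol\jmath}dz^\alpha$ to be a holomorphic $(1,0)$-form and $\Theta_{i\ol\beta}d\ol z^\beta$ to be a $(0,1)$-form with holomorphic coefficients on each fiber; Lemma~\ref{le:par} then yields $\Theta_{\alpha\ol\jmath;\gamma}=\Theta_{\alpha\ol\jmath;\ol\delta}=0$ and $\Theta_{i\ol\beta;\gamma}=\Theta_{i\ol\beta;\ol\delta}=0$ (two of these being exactly \eqref{eq:thag}, \eqref{eq:thbd}). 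Consequently, when $\chi_{i\ol\jmath}=\Theta_{i\ol\jmath}-a^\alpha_i\Theta_{\alpha\ol\jmath}-\Theta_{i\ol\beta}a^\ol\beta_\ol\jmath$ is differentiated twice, the cross terms where a derivative hits $\Theta_{\alpha\ol\jmath}$ or $\Theta_{i\ol\beta}$ all drop out, leaving
\begin{equation*}
  \chi_{i\ol\jmath;\gamma\ol\delta} = \Theta_{i\ol\jmath;\gamma\ol\delta} - a^\alpha_{i;\gamma\ol\delta}\,\Theta_{\alpha\ol\jmath} - \Theta_{i\ol\beta}\,a^\ol\beta_{\ol\jmath;\gamma\ol\delta}.
\end{equation*}

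Next, I would trace with $g^{\ol\delta\gamma}$ and handle the three pieces separately. The first vanishes: since $\Theta_{i\ol\jmath}=\pt_i\pt_\ol\jmath\log g$ and Ricci-flatness of the fibers gives $\pt_\gamma\pt_\ol\delta\log g=\Theta_{\gamma\ol\delta}\equiv 0$ on $\cX$, the commuting partials yield $g^{\ol\delta\gamma}\pt_\gamma\pt_\ol\delta\Theta_{i\ol\jmath}=\pt_i\pt_\ol\jmath(0)=0$. For the second piece, I commute $\nabla_\gamma$ with $\nabla_\ol\delta$ on the $(1,0)$-vector $a^\alpha_i$; the commutator is a curvature contraction $-R^\alpha_{\,\beta\gamma\ol\delta}a^\beta_i$, whose trace against $g^{\ol\delta\gamma}$ is the Ricci endomorphism applied to $a_i$ and therefore vanishes. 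Hence
\begin{equation*}
 g^{\ol\delta\gamma}a^\alpha_{i;\gamma\ol\delta} = g^{\ol\delta\gamma}a^\alpha_{i;\ol\delta\gamma} = g^{\ol\delta\gamma}A^\alpha_{i\ol\delta;\gamma} = -g^{\ol\beta\alpha}\Theta_{i\ol\beta}
\end{equation*}
by Lemma~\ref{le:ATh}. The third piece is treated identically, either by conjugating the above or by applying the conjugate of Lemma~\ref{le:ATh} to the $(0,1)$-vector $a^\ol\beta_\ol\jmath$, yielding $g^{\ol\delta\gamma}a^\ol\beta_{\ol\jmath;\gamma\ol\delta}=-g^{\ol\beta\alpha}\Theta_{\alpha\ol\jmath}$.

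Assembling these three contributions gives $g^{\ol\delta\gamma}\chi_{i\ol\jmath;\gamma\ol\delta}=2\,g^{\ol\beta\alpha}\Theta_{i\ol\beta}\Theta_{\alpha\ol\jmath}$, which is \eqref{eq:laplchi} with the sign convention $\Box=-g^{\ol\delta\gamma}\pt_\gamma\pt_\ol\delta$. The main obstacle is the bookkeeping when commuting mixed-type covariant derivatives on the vectors $a^\alpha_i$ and $a^\ol\beta_\ol\jmath$; once the Ricci-flat identity kills the curvature term, the parallelism of $\Theta_{\alpha\ol\jmath}$, $\Theta_{i\ol\beta}$ together with Lemma~\ref{le:ATh} makes the identification immediate.
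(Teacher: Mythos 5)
Your argument is correct and follows essentially the same route as the paper: expand $\chi_{i\ol\jmath}$, drop the cross terms using the fiberwise parallelism of $\Theta_{\alpha\ol\jmath}$ and $\Theta_{i\ol\beta}$ (Lemma~\ref{le:theta}, \eqref{eq:thag}, \eqref{eq:thbd}), kill the $\Theta_{i\ol\jmath}$-term by identifying it with derivatives of the vanishing fiberwise Ricci form, and evaluate the remaining two traces via Lemma~\ref{le:ATh}. You are merely more explicit than the paper about the commutation of the mixed covariant derivatives on $a^\alpha_i$ and the Ricci-flat cancellation of the resulting curvature term.
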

\begin{proof}
Because of Lemma~\ref{le:theta}, and \eqref{eq:thag}, \eqref{eq:thbd}, we see that
\begin{gather*}
  \Box \chi_{i\ol\jmath}= - g^{\ol\delta\gamma} \chi_{i\ol\jmath;\ol\delta\gamma}= - g^{\ol\delta\gamma}\left(\Theta_{i\ol\jmath,\ol\delta\gamma} -  a^\alpha_{i;\ol\delta\gamma} \Theta_{\alpha\ol\jmath} - \Theta_{i\ol\beta} a^\ol\beta_{\ol\jmath;\ol\delta\gamma}  \right)\\
  \hskip3.3cm=-g^{\ol\delta\gamma}\left(\Theta_{\gamma\ol\delta|i\ol\jmath} -A^\alpha_{i\ol\delta;\gamma}\Theta_{\alpha\ol\jmath} - \Theta_{i\ol\beta}A^\ol\beta_{\ol\jmath\gamma;\ol\delta}\right),
\end{gather*}
where the last equality follows from the fiberwise Ricci flatness of the metric. Now the vanishing of the Ricci forms and Lemma~\ref{le:ATh} imply the claim.
\end{proof}
Integration of \eqref{eq:laplchi} over the fibers implies that the forms $\Theta_{\alpha\ol\jmath}\,dz^\alpha$, and their conjugates vanish identically. Now by \eqref{eq:chi} the Laplacians  $\Box\chi_{i\ol\jmath}$ and $\Box\Theta_{i\ol\jmath}$ are equal and vanish. This means that the coefficients $\Theta_{i\ol\jmath}$ only depend on the parameter $s$. Altogether we have the following Corollary.
\begin{corollary}
  \begin{eqnarray}
    \Theta_{i\ol\beta}&=&0 \label{eq:thib} \\
    \Theta_{\alpha\ol\jmath}&=&0\\
    \Theta_{i\ol\jmath}(z,s)&=& \Theta_{i\ol\jmath}(s). \label{eq:Thetaij}
  \end{eqnarray}
\end{corollary}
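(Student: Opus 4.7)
The plan is to read the Corollary off the Laplacian identity \eqref{eq:laplchi} by integrating over fibers and exploiting both the positive-definiteness of the fiber metric and the fact that harmonic functions on a compact manifold are constant.

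First, I would specialize \eqref{eq:laplchi} to $j=i$ and integrate over a fiber $\cX_s$. Since $\chi_{i\ol\imath}|\cX_s$ is a smooth function on the compact fiber, the integral of its Laplacian vanishes, so
$$
0 \;=\; -2\int_{\cX_s} g^{\ol\beta\alpha}\,\Theta_{i\ol\beta}\,\Theta_{\alpha\ol\imath}\; dV .
$$
Since $\Theta$ is a real $(1,1)$-form one has $\Theta_{\alpha\ol\imath}=\ol{\Theta_{i\ol\alpha}}$, so the integrand is the negative of the pointwise squared norm $|\Theta_{i\ol\beta}\,dz^{\ol\beta}|^2_{\omega_{\cX_s}}$ of the fiberwise $(0,1)$-form $\Theta_{i\ol\beta}\,dz^{\ol\beta}$. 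Positive-definiteness of $(g^{\ol\beta\alpha})$ forces $\Theta_{i\ol\beta}\equiv 0$ on each fiber, which is \eqref{eq:thib}; complex conjugation (equivalently, exchanging the roles of $i$ and $\ol\jmath$) gives $\Theta_{\alpha\ol\jmath}\equiv 0$.

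Next, once these mixed blocks vanish, the defining formula \eqref{eq:chi} collapses to $\chi_{i\ol\jmath}=\Theta_{i\ol\jmath}$, so \eqref{eq:laplchi} reduces on each fiber to $\Box\Theta_{i\ol\jmath}=0$. A fiberwise harmonic function on the compact \ka\ manifold $\cX_s$ must be constant along the fiber, hence $\Theta_{i\ol\jmath}(z,s)$ depends only on $s$, which is \eqref{eq:Thetaij}.

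I do not anticipate a genuine obstacle here: the real computational work has already been done in the preceding Proposition and in Lemmas~\ref{le:par}, \ref{le:theta}, \ref{le:ATh}. The only minor point is to make sure that, for each fixed pair of indices $(i,\ol\jmath)$, the symbols $\chi_{i\ol\jmath}$ and $\Theta_{i\ol\jmath}$ are interpreted as genuine functions on $\cX$ (smooth along the fibers), so that both the fiberwise Laplacian $\Box$ and the fiber integral $\int_{\cX_s}$ act meaningfully on them. With that bookkeeping in place, the three conclusions of the Corollary follow immediately from \eqref{eq:laplchi} together with \eqref{eq:thag}, \eqref{eq:thbd} and Lemma~\ref{le:theta}.
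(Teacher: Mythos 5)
Your argument is correct and is essentially the paper's own: the authors likewise integrate \eqref{eq:laplchi} over the (compact) fibers to kill the mixed components $\Theta_{i\ol\beta}$, $\Theta_{\alpha\ol\jmath}$, and then observe that \eqref{eq:chi} reduces to $\chi_{i\ol\jmath}=\Theta_{i\ol\jmath}$, so fiberwise harmonicity forces $\Theta_{i\ol\jmath}$ to be constant along each fiber. Your write-up just makes explicit the polarization to $j=i$ and the reality relation $\Theta_{\alpha\ol\imath}=\ol{\Theta_{i\ol\alpha}}$, which the paper leaves implicit.
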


\begin{proof}[Proof of Proposition~\ref{pr:harmrep} \eqref{eq:dbclsd}]
  The statement follows from Lemma~\ref{le:ATh} together with \eqref{eq:thib}.
\end{proof}

\section{Construction of the global \ka\ form}
We first compute the curvature of the relative canonical bundle for a holomorphic, polarized family of \CY manifolds, equipped with \ke metrics according to Yau's theorem.

In case the canonical bundles of the fibers are trivial, rather than some power of it, the Hodge bundles $\cH^{n,0}$ on the base space of a holomorphic family are provided with the natural metric. It was shown in this case (cf.\ \cite{wang,lu,lu-sun}) that the curvature form of the Hodge line bundle is equal to the \wp form. This fact was based upon the formula of Griffiths \cite{gri} for the curvature of the Hodge bundles over the period domain, combined with G.~Tian's result that characterized the \wp metric for \CY manifolds in terms of the period map \cite{tian}. The analogous result, relating the \wp form to the invariant metrics on period domains for families of holomorphic symplectic manifolds was shown in \cite{Sch0}). The result \eqref{eq:relcan} is somewhat related. However, it applies to the curvature of the relative canonical bundle on the total space of a holomorphic family. For fibers with trivial canonical bundles it is contained the article \cite{wang} by C.L.~Wang. Our approach to \eqref{eq:relcan} in the general case will be needed in the proof of our main theorem.

\subsection{\wp metric for \CY manifolds}
Let $(f:\cX\to S, \lambda_{\cX/S})$ be a holomorphic family of polarized \CY manifolds. For any $s\in S$ we consider the \ks map
$$
\rho_s: T_sS \to H^1(\cX_s,\cT_{\cX_s}).
$$
According to Section~\ref{se:mcharwp},
$$
(\db v_i)|\cX_s=A^\alpha_{i\ol\beta}\pt_\alpha dz^\ol\beta
$$
is the harmonic representative of
$$
\rho_s(\pt/\pt s^i).
$$
The $L^2$ inner product of harmonic representatives is known to define a \ka\ form $\omega^{WP}$ (cf.\ \cite{nanni,siu,f-s:extremal}).  We use the following notation:
\begin{eqnarray*}
  \omega^{WP}&=& \ii G^{WP}_{i\ol\jmath}(s)\, ds^i \we ds^\ol\jmath  \text{, where }\\
   G^{WP}_{i\ol\jmath}(s)&=& \int_{\cX_s} A^{\alpha}_{i\ol\beta} \, A^\ol\delta_{\ol\jmath \gamma}\, g_{\alpha\ol\delta}\, g^{\ol\beta\gamma} g\, dV.
\end{eqnarray*}
The above formulas are valid for smooth base spaces $S$, or if these are singular, the local coordinates $s_i$ are taken on a local smooth, ambient space $U$, with a local minimal embedding $S\subset U$. Since the Kuranishi space is known to be smooth, and the construction of \wp metric is functorial, i.e.\ compatible with base change, we can restrict ourselves to the latter smooth situation.

\subsection{Computation of $\mathbf{c_{1,\R}(\cK_{\cX/S}, g^{-1})}$}
Again we assume $S$ to be smooth. Since the notion of the first real Chern form of the relative canonical bundle $(\cK_{\cX/S},g^{-1})$ is functorial, this is sufficient.

The form $\omega^{WP}$ on the total space of a holomorphic family $(f:\cX \to S, \lambda_{\cX/S})$ defines an inner product for horizontal lifts \eqref{eq:vi} of tangent vectors of $S$.
\begin{equation}\label{eq:defphi}
\varphi_{i\ol\jmath}: = \langle v_i, v_j\rangle_{\omega_\cX} = g_{i\ol\jmath} - a^\alpha_i a^\ol\beta_{\ol\jmath} g_{\alpha\ol\beta}.
\end{equation}
We denote the pointwise inner product of harmonic \ks forms $A_i$ and $A_j$ by $A_i \cdot A_\ol\jmath$.
\begin{lemma}
\begin{equation}\label{eq:varphi}
  \Box \varphi_{i\ol\jmath} = -\Theta_{i\ol\jmath} + A_i \cdot A_\ol\jmath.
\end{equation}
\end{lemma}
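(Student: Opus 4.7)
The plan is to compute $\Box\varphi_{i\ol\jmath}$ directly from an explicit formula for the fiberwise $\db$-derivative of $\varphi_{i\ol\jmath}$, and then extract $\Theta_{i\ol\jmath}$ from a divergence-type term via the Ricci-flatness of the fibers. Since $v_i$ is $\omega_\cX$-horizontal and $\omega_\cX$ is of type $(1,1)$, one first checks that $\iota_{v_i}\omega_\cX = \ii\varphi_{i\ol\jmath}\,ds^{\ol\jmath}$ (the fiber-antiholomorphic components of the contraction vanish precisely by \eqref{eq:horli}). Cartan's formula and $d\omega_\cX=0$ then give $\cL_{v_i}\omega_\cX = d\iota_{v_i}\omega_\cX$, and equating $(0,2)$-coefficients on the basis $dz^{\ol\beta}\we ds^{\ol\jmath}$ (fiber $\times$ parameter) yields
\[
  \pt_{\ol\beta}\varphi_{i\ol\jmath} \;=\; g_{\alpha\ol\jmath}A^\alpha_{i\ol\beta} \;-\; g_{\alpha\ol\beta}\pt_{\ol\jmath}a^\alpha_i.
\]
Here the second term comes from the parameter--fiber wedge $ds^{\ol s}\we dz^{\ol\delta}$ inside $\cL_{v_i}\omega_\cX$ and, as will be seen, is the source of $-\Theta_{i\ol\jmath}$.

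Applying $-g^{\ol\beta\gamma}\pt_\gamma$ to this identity splits $\Box\varphi_{i\ol\jmath}$ into two pieces. The first, $-g^{\ol\beta\gamma}\pt_\gamma(g_{\alpha\ol\jmath}A^\alpha_{i\ol\beta})$, is handled by the product rule: the $A^\alpha_{i\ol\beta;\gamma}$ contribution is annihilated by \eqref{eq:dbclsd}, leaving $-g^{\ol\beta\gamma}g_{\alpha\ol\jmath;\gamma}A^\alpha_{i\ol\beta}$. Using the conjugate horizontal-lift $g_{\gamma\ol\jmath}=-a^{\ol\sigma}_{\ol\jmath}g_{\gamma\ol\sigma}$, Kähler parallelism $g_{\gamma\ol\sigma;\alpha}=0$, and the symmetry $g_{\alpha\ol\jmath;\gamma}=g_{\gamma\ol\jmath;\alpha}$ (itself coming from $d\omega_\cX=0$), one obtains $g_{\alpha\ol\jmath;\gamma}=-g_{\alpha\ol\sigma}A^{\ol\sigma}_{\ol\jmath\gamma}$, and this piece collapses to the pointwise Hodge norm $g^{\ol\beta\gamma}g_{\alpha\ol\sigma}A^\alpha_{i\ol\beta}A^{\ol\sigma}_{\ol\jmath\gamma}=A_i\cdot A_{\ol\jmath}$.

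The second piece, $+g^{\ol\beta\gamma}\pt_\gamma(g_{\alpha\ol\beta}\pt_{\ol\jmath}a^\alpha_i)$, is where Ricci-flatness enters decisively. Expanding by the product rule, the contraction $g^{\ol\beta\gamma}g_{\alpha\ol\beta}=\delta^\gamma_\alpha$ collapses one summand to $\pt_{\ol\jmath}\pt_\alpha a^\alpha_i$, while the other carries the coefficient $g^{\ol\beta\gamma}\pt_\gamma g_{\alpha\ol\beta}=\pt_\alpha\log g$, which vanishes identically on each fiber: Ricci-flatness makes $\log g$ pluriharmonic on the fiber, hence constant on the compact Calabi-Yau fiber, so $\pt_\mu\log g\equiv 0$. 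The same vanishing, combined with $d\omega_\cX=0$ and the formula $a^\alpha_i=-g^{\ol\sigma\alpha}g_{i\ol\sigma}$, evaluates $\pt_\alpha a^\alpha_i$ to $-\pt_i\log g$; differentiating in $\ol s^{\ol\jmath}$ yields $-\Theta_{i\ol\jmath}$, and the two pieces sum to the claim. The main obstacle is the first step: a computation that only tracks the $dz^{\ol\gamma}\we dz^{\ol q}$ coefficients of $\cL_{v_i}\omega_\cX$ misses the $-g_{\alpha\ol\beta}\pt_{\ol\jmath}a^\alpha_i$ term in the formula for $\pt_{\ol\beta}\varphi_{i\ol\jmath}$ and gives the incorrect answer $\Box\varphi_{i\ol\jmath}=A_i\cdot A_{\ol\jmath}$; keeping both contributions is exactly what produces the $-\Theta_{i\ol\jmath}$.
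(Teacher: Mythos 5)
Your route --- deriving $\iota_{v_i}\omega_\cX=\ii\,\varphi_{i\ol\jmath}\,ds^{\ol\jmath}$ from horizontality and then reading off $\pt_{\ol\beta}\varphi_{i\ol\jmath}=g_{\alpha\ol\jmath}A^\alpha_{i\ol\beta}-g_{\alpha\ol\beta}\pt_{\ol\jmath}a^\alpha_i$ from $\cL_{v_i}\omega_\cX=d\iota_{v_i}\omega_\cX$ --- is genuinely different from the paper's, which expands $\Box\bigl(g_{i\ol\jmath}-g^{\ol\beta\alpha}g_{i\ol\beta}g_{\alpha\ol\jmath}\bigr)$ with fiberwise covariant derivatives and lets the cross term $g^{\ol\beta\alpha}g^{\ol\delta\gamma}g_{\alpha\ol\jmath;\ol\delta}\,g_{i\ol\beta;\gamma}$ cancel against the correction arising from $-g^{\ol\delta\gamma}g_{\gamma\ol\delta|i\ol\jmath}$. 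Your first-order identity is correct, and your treatment of the first piece (killing $A^\alpha_{i\ol\beta;\gamma}$ by \eqref{eq:dbclsd} and using $g_{\alpha\ol\jmath;\gamma}=-g_{\alpha\ol\sigma}A^{\ol\sigma}_{\ol\jmath\gamma}$ to produce $A_i\cdot A_{\ol\jmath}$) is sound.

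The second piece, however, contains a genuine error: $\pt_\mu\log g$ does \emph{not} vanish on the fibers. The function $\log g=\log\det(g_{\alpha\ol\beta})$ is only locally defined --- under a holomorphic change of fiber coordinates it shifts by $\log|J|^2$ --- so pluriharmonicity (which is exactly Ricci-flatness) does not make it a constant global function on the compact fiber; $\pt_\alpha\log g=\Gamma^\mu_{\mu\alpha}$ is a contracted Christoffel symbol and is generically nonzero for a Ricci-flat metric in general coordinates. You invoke this false vanishing twice: to discard $g^{\ol\beta\gamma}(\pt_\gamma g_{\alpha\ol\beta})\,\pt_{\ol\jmath}a^\alpha_i$ and to evaluate $\pt_\alpha a^\alpha_i=-\pt_i\log g$. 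The correct computation keeps these terms: one finds $\pt_\alpha a^\alpha_i=-a^\mu_i\pt_\mu\log g-\pt_i\log g$, so that
\begin{equation*}
g^{\ol\beta\gamma}\pt_\gamma\bigl(g_{\alpha\ol\beta}\,\pt_{\ol\jmath}a^\alpha_i\bigr)=(\pt_\alpha\log g)\,\pt_{\ol\jmath}a^\alpha_i-(\pt_{\ol\jmath}a^\mu_i)\,\pt_\mu\log g-a^\mu_i\,\Theta_{\mu\ol\jmath}-\Theta_{i\ol\jmath},
\end{equation*}
where the first two terms cancel each other and $\Theta_{\mu\ol\jmath}=0$ by the Corollary, leaving $-\Theta_{i\ol\jmath}$. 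So your final formula is right, but only because the illegitimately discarded terms happen to cancel; as written the step is unjustified and must be replaced by this cancellation (or by working covariantly throughout, as the paper does).
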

Note that the tensor $\Theta_{i\ol\jmath}$ only depends upon the parameter $s$ by \eqref{eq:Thetaij}.
\begin{proof}
  \begin{gather*}
   \Box \varphi_{i\ol\jmath} =- g^{\ol\delta\gamma} g_{i\ol\jmath|\ol\delta\gamma}+ g^{\ol\beta\alpha}g^{\ol\delta\gamma} \left(A_{\ol\jmath\alpha;\gamma}\,A_{i\ol\beta;\ol\delta}+ g_{\alpha\ol\jmath;\ol\delta\gamma}\,g_{i\ol\beta}+ g_{\alpha\ol\jmath}\,g_{i\ol\beta;\ol\delta\gamma}+ g_{\alpha\ol\jmath;\ol\delta}\,g_{i\ol\beta;\gamma}     \right)
  \end{gather*}
Because of the Ricci flatness of $\omega_{\cX_s}$, and \eqref{eq:dbclsd} we have $g^{\ol\delta\gamma} g_{\alpha\ol\jmath;\ol\delta\gamma} = g^{\ol\delta\gamma} g_{\alpha\ol\jmath;\gamma\ol\delta} = 0$. Also $g^{\ol\delta\gamma}g_{i\ol\beta;\ol\delta\gamma}=0$ by \eqref{eq:dbclsd}.

Finally
\begin{gather*}
 - g^{\ol\delta\gamma}\, g_{i\ol\jmath|\ol\delta\gamma} = -g^{\ol\delta\gamma} \, g_{\gamma\ol\delta|i\ol\jmath} = -  (g^{\ol\delta\gamma} g_{\gamma\ol\delta|i})_{|\ol\jmath} + g^{\ol\delta\alpha}\,g_{\alpha\ol\beta|\ol\jmath}\,g^{\ol\beta\gamma}\,g_{\gamma\ol\delta|i} \end{gather*}
Now $ (g^{\ol\delta\gamma} g_{\ol\delta\gamma|i})_{|\ol\jmath}= (\log g)_{i\ol\jmath}= \Theta_{i\ol\jmath}$, and the claim follows.
\end{proof}
\begin{proof}[Proof of \eqref{eq:relcan}]
We integrate \eqref{eq:varphi}, and apply \eqref{eq:Thetaij}.
\end{proof}

\begin{proof}[Proof of Theorem~\ref{th}]
We claim that for $\omega_{\cX}$ according to Proposition~\ref{pr:omegaX}, and a suitable number $c>0$ the global form
$$
\omega_\cX + c f^*\omega^{WP}
$$
is \ka. The number $c>0$ will be chosen {\em globally}\/ for the whole given family. Namely we assume that the Green's function $G_s(z,w)$ for the Laplacian of functions on the fibers $\cX_s$ satisfies
$$
G_s(z,w)>-c
$$
for all $s\in S$.

The positive definiteness can be verified after restricting the base space to a neighborhood of any given point $s_0\in S$. Moreover, given a non-zero tangent vector of $S$ at a point $s_0\in S$, we find a subspace $\wt S\subset S$ of embedding dimension one, which is contained in the first infinitesimal neighborhood of $s_0$ in $S$ determined by the given tangent vector. We denote by $\cX_{\wt\cS}= \cX\times_{\cS}\wt S$ the restricted family. As the construction of the \wp metric is functorial, i.e.\ compatible with base change, and all directions are unobstructed, we can use the formulas from the previous sections.

Again $s$ denotes a holomorphic coordinate on $\wt S$, and by abuse of notation the letter $s$ also denotes the corresponding index. Let $\wt\cX  \to \wt S $ be the above family.

  Let $\varphi_{s\ol s}(z,s)$ be given in the sense of \eqref{eq:defphi}. One can see easily that $\varphi_{s\ol s}$ comes from an $(n+1)\times(n+1)$-determinant:
  $$
  \varphi_{s\ol s}\, g  = \det \left(
\begin{array}{cc}
g_{s\ol s} & g_{s\ol\beta}\\ g_{\alpha\ol s}& \gab
\end{array}
\right)
  $$
  so that
  $$
  \varphi_{s\ol s}{\ii} ds\we \ol{ds}  \, g \, dV = \omega^{n+1}_{\cX_{\wt S}}.
  $$
  The normalization \eqref{eq:norm} implies that for any fixed $s\in S$ the harmonic projection of $\varphi_{s\ol s}(z,s)$ vanishes. Denote by $G_s$ the Green's operators for functions on the fiber $\cX_s$ as above. We set $\Box=\Box_s$ for the Laplacians on the fiber, and we denote by $H_s$ the respective harmonic projections. For any function $\chi$ on $\cX_s$, $s\in S$
  $$
  G_s(\chi)= \int_{\cX_s} G_s(z,w)\chi(w)g(w) dV(w)
  $$
  holds.

  Now by \eqref{eq:varphi}, since the harmonic projections $H_s(\varphi_{s\ol s})$ vanish, we have for $s \in \wt S$
  $$
  \varphi_{s\ol s}= G_s( \Box_s (\varphi_{s\ol s})) = G_s(-\Theta_{s\ol s} + A_s \cdot A_{\ol s})= G_s(A_s \cdot A_{\ol s}).
  $$
  The last equality holds because of \eqref{eq:Thetaij}. By our assumption, on all fibers $G_s(z,w) \geq -c$ for some $c>0$ (and uniformly for all $s\in S$).

  Now on $\wt S$ we have $G_s(A_s\cdot A_\ol s)\gequov{eq:relcan} -c \cdot {\rm vol}(\cX_s)\, \Theta_{s\ol s}$.

  We take the following normalization for the \ka\ form on the total space $\cX$. Let
  \begin{equation}\label{eq:ansatzomega}
  \wt\omega_\cX = \omega_\cX + (c+1)\, f^* \omega^{WP}.
  \end{equation}
  Then
  \begin{equation}\label{eq:finestim}
  \wt\omega_\cX^{n+1}|\cX_{\wt S}= (\varphi_{s\ol s} + (c+1) \Theta_{s\ol s}) \, g\, dV \we \ii ds \we \ol{ds}\geq  {\rm vol}(\cX_s)\, \Theta_{s\ol s} g \, dV \we \ii ds \we \ol{ds}.
  \end{equation}
  This holds for any space $\wt S\subset S$ of the above kind through any given point of $S$ and determined by an arbitrary tangent vector. It proves the positive definiteness of $\wt \omega_\cX$.
\end{proof}
We just proved the following fact.
\begin{remark}
  By \eqref{eq:finestim} for the components of type $(n,n)$ on $\cX$ and type $(1,1)$ in horizontal direction
  $$
  \wt\omega^{n+1} \geq \wt\omega^{n}\we f^*\omega^{WP}
  $$
  holds, analogous to \cite[(13)]{sch:inv12}.
\end{remark}

\subsection*{Acknowledgements} The second named author was supported by the National Research Foundation (NRF) of Korea grant funded by the Korea government (No.\ 2018R1C1B3005963) and Pusan National University Research Grant, 2017.

\end{document}